\newtheorem{thm}{Theorem}[section]
\begin{document}

\title{Multi-dimensional Weiss operators}

\author[1,2]{Sergey Borisenok}
\author[3,*]{M. Hakan Erkut}
\author[1]{Ya\c{s}ar Polato\u{g}lu}
\author[1]{Murat Demirer}

\date{}

\affil[1]{Department of Mathematics and Computer Science,
\.{I}stanbul K\"{u}lt\"{u}r University, Bak\i rk\"{o}y 34156,
\.{I}stanbul, Turkey, email: s.borisenok@iku.edu.tr,
y.polatoglu@iku.edu.tr, m.demirer@iku.edu.tr} \affil[2]{Abdus Salam
School of Mathematical Sciences, Government College University, 68B
New Muslim Town 54600, Lahore, Pakistan, email: borisenok@gmail.com}
\affil[3]{Department of Physics, \.{I}stanbul K\"{u}lt\"{u}r
University, Bak\i rk\"{o}y 34156, \.{I}stanbul, Turkey, email:
m.erkut@iku.edu.tr} \affil[*]{{\bf Corresponding Author's Full
Postal Address: M. Hakan Erkut, \.{I}stanbul K\"{u}lt\"{u}r
\"{U}niversitesi, Fen-Edebiyat Fak\"{u}ltesi, Fizik
B\"{o}l\"{u}m\"{u}, Atak\"{o}y Kamp\"{u}s\"{u}, Bak\i rk\"{o}y
34156, \.{I}stanbul, Turkey. Mobile Phone Number: (+90) 532 740
7136. Fax Number: (+90) 212 661 9274. E-mail: m.erkut@iku.edu.tr}}

\maketitle

\begin{abstract}
We present a solution of the Weiss operator family generalized for
the case of $\mathbb{R}^{d}$ and formulate a $d$-dimensional
analogue of the Weiss Theorem. Most importantly, the generalization
of the Weiss Theorem allows us to find a sub-set of null class
functions for a partial differential equation with the generalized
Weiss operators. We illustrate the significance of our approach
through several examples of both linear and non-linear partial
differential equations.
\end{abstract}

{\bf Keywords:} Partial differential equations, Weiss operators

{\bf MSC2010:} 35A24, 47F05

\section{Introduction}

To investigate the integrability of nonlinear partial differential
equations, many different methods have been developed, among them is
Painlev\'e test \cite{WTC83}. In the framework of this approach, the
class of one-dimensional ('scalar') differential operators was
defined first in \cite{Weiss84}, and in \cite{Weiss86} has been
applied to solitonic-type PDEs.

The family of Weiss operators performs a special kind of ordinary
derivative operators of integer order $n$ ($n>0$, and for each order
only one such an operator exists), and the general solution for each
Weiss operator can be found in a very simple form. Following
\cite{Weiss84}, let's define for a differentiable scalar function
$\phi (x)$ the class of factorized differential operators as
\begin{equation}
L_{n+1}=\prod_{j=0}^{n}\left[ \frac{d}{dx}+\left( j-\frac{n}{2}\right) V%
\right] =\left( \frac{d}{dx}-\frac{n}{2}V\right) \cdot ...\cdot \left( \frac{%
d}{dx}+\frac{n}{2}V\right) \ ,  \label{1}
\end{equation}%
where $V=\phi _{xx}/\phi _{x}$ is the so-called pre-Schwarzian. For
$n=0$ equation (\ref{1}) produces the ordinary derivative
$L_{1}=d/dx$; for $n=1$ we get the Schr\"{o}dinger operator
\begin{equation*}
L_{2}=\left( \frac{d}{dx}-\frac{1}{2}V\right) \left( \frac{d}{dx}+\frac{1}{2}%
V\right) =\left( \frac{d}{dx}\right) ^{2}+\frac{1}{2}S\ ;
\end{equation*}%
for $n=2$, we get the Lenard operator
\begin{equation*}
L_{3}=\left( \frac{d}{dx}-V\right) \frac{d}{dx}\left( \frac{d}{dx}+V\right)
=\left( \frac{d}{dx}\right) ^{3}+2S\frac{d}{dx}+S_{x}\ ,
\end{equation*}%
where by $S$ we denote the Schwarzian of the function $\phi (x)$
\begin{equation}
S=V_{x}-\frac{1}{2}V^{2}=\left( \frac{\phi _{xx}}{\phi _{x}}\right) _{x}-%
\frac{1}{2}\left( \frac{\phi _{xx}}{\phi _{x}}\right) ^{2}\ .  \label{2}
\end{equation}

The important result of \cite{Weiss84} is related to the null space of Weiss
operators:

\begin{thm}
The null space of the operator family (\ref{1}) is given by the set of
linearly independent $n+1$ solutions $\{\phi _{x}^{-n/2}\phi ^{k}\}$; $%
k=0,1,2,...,n$, i.e. for every $k$
\begin{equation}
L_{n+1}[\phi _{x}^{-n/2}\phi ^{k}]=0\ .  \label{3}
\end{equation}
\end{thm}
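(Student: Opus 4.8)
The plan is to exploit the factorized structure of $L_{n+1}$ together with the fact that $V=\phi_{xx}/\phi_x=(\ln\phi_x)_x$, which makes each first-order factor act almost diagonally on expressions of the form $\phi_x^{a}\phi^{b}$. First I would establish the single identity that drives everything. Since $\frac{d}{dx}\phi_x^{a}=a\,\phi_x^{a-1}\phi_{xx}=a\,V\,\phi_x^{a}$, the product rule gives, for any constant $c$ and any differentiable $f$,
\begin{equation*}
\left(\frac{d}{dx}+cV\right)\!\left[\phi_x^{a}f\right]=\phi_x^{a}\left(\frac{d}{dx}+(a+c)V\right)f .
\end{equation*}
This conjugation identity says that pulling a factor $\phi_x^{a}$ to the left shifts the coefficient of $V$ by $a$; it is the engine of the whole argument.

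Next I would apply the $n+1$ factors of $L_{n+1}$ to $u_0:=\phi_x^{-n/2}\phi^{k}$ one at a time, working inward from the rightmost factor ($j=n$, coefficient $c=\tfrac{n}{2}$). The claim to be proved by induction on $m$ is that after applying the $m$ rightmost factors one obtains
\begin{equation*}
u_m=\frac{k!}{(k-m)!}\,\phi_x^{\,m-n/2}\,\phi^{\,k-m}.
\end{equation*}
The induction step is precisely where the coefficients $j-\tfrac{n}{2}$ do their work: the $(m+1)$-st factor from the right has $j=n-m$, hence $c=\tfrac{n}{2}-m$, while $u_m$ carries the power $a=m-\tfrac{n}{2}$ of $\phi_x$, so that $a+c=0$. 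The identity above then collapses the factor to $\phi_x^{\,m-n/2}\tfrac{d}{dx}$ acting on the pure power $\phi^{\,k-m}$, which differentiates to $(k-m)\phi^{\,k-m-1}\phi_x$ and reproduces $u_{m+1}$. This exact cancellation at every stage is the heart of the proof and the step I expect to require the most care to present cleanly: it is exactly why the coefficients are centered at $n/2$ and why the ordering of the factors matters.

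Finally I would read off the conclusion. Since $u_k=k!\,\phi_x^{\,k-n/2}$ contains no factor of $\phi$, the next factor differentiates a constant power of $\phi$ and yields $u_{k+1}=0$; because $0\le k\le n$ we have $k+1\le n+1$, so this annihilation occurs no later than the application of the last factor, and all remaining (linear) factors preserve the zero. Hence $L_{n+1}[\phi_x^{-n/2}\phi^{k}]=0$ for every $k$, establishing \eqref{3}. To see that these functions exhaust the null space, I would observe that $L_{n+1}$ is a linear ordinary differential operator of order $n+1$ with leading term $d^{\,n+1}/dx^{\,n+1}$, so on any interval where $\phi_x\neq0$ its kernel is $(n+1)$-dimensional; the $n+1$ functions $\phi_x^{-n/2}\phi^{k}$ inherit linear independence from the monomials $\phi^{k}$ (for non-constant $\phi$) and therefore form a basis of that kernel.
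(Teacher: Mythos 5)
Your proof is correct and follows essentially the same route as the paper's own argument (the proof of Theorem 2.1 specialized to $d=1$, $a_1=1$): the factors are applied one at a time from the right, each one raising the power of $\phi_x$ and lowering the power of $\phi$ while contributing a factor $k-m$, so the expression is annihilated once $m=k$. Your conjugation identity $\left(\tfrac{d}{dx}+cV\right)[\phi_x^{a}f]=\phi_x^{a}\left(\tfrac{d}{dx}+(a+c)V\right)f$ is just a cleaner packaging of the paper's bracket-by-bracket computation, and your closing dimension count supplies the ``these span the whole kernel'' half of the claim, which the paper asserts but does not argue.
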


In this paper we propose the generalization of Weiss family
$L_{n+1}$ for multi-dimensional case of $\mathbb{R}^{d}$. We
formulate a $d$-dimensional analogue of Theorem 1.1 for a special
class of partial differential operators that provides us with a
sub-set of null class functions for a partial differential equation
in terms of the generalized Weiss operators. Our generalization
therefore extends the role played by the Weiss operators in the
construction of some special class of ordinary differential
equations to partial differential equations as well. We illustrate
the significance of our approach through several examples of both
linear and non-linear partial differential equations.

\section{Generalized Weiss operators in $\mathbb{R}^{d}$}

Let's define for the vector $\boldsymbol{x}=(x_{1},x_{2},...,x_{d})\in
\mathbb{R}^{d}$ the linear differential operator:
\begin{equation}
D=\sum_{i=1}^{d}a_{i}(\boldsymbol{x})\frac{\partial }{\partial x_{i}}
\label{4}
\end{equation}%
with differentiable scalar functions $a_{i}(\boldsymbol{x})$. Note that for $%
a_{i}(\boldsymbol{x})=x_{i}$ the definition (\ref{4}) becomes the so-called
homogeneous operator.

Definition (\ref{4}) implies for the power $m$ of some scalar function $%
A(\boldsymbol{x})$:
\begin{equation}
D(A^{m}(\boldsymbol{x}))=\sum_{i=1}^{d}a_{i}(\boldsymbol{x})\frac{\partial
A^{m}(\boldsymbol{x})}{\partial x_{i}}=mA^{m-1}(\boldsymbol{x})DA(%
\boldsymbol{x})\ .  \label{5}
\end{equation}%
Also for a given differentiable function $\phi (\boldsymbol{x})$ we define
with (\ref{4}) a generalized pre-Schwarzian:
\begin{equation}
V=\frac{D^{2}\phi (\boldsymbol{x})}{D\phi (\boldsymbol{x})}\ .  \label{6}
\end{equation}%
The class of differential operators in $d$-dimensional space, given by:
\begin{equation}
L_{n+1}=\prod_{j=0}^{n}\left[ D+\left( j-\frac{n}{2}\right) V\right] =\left(
D-\frac{n}{2}V\right) \cdot ...\cdot \left( D+\frac{n}{2}V\right) \ ,
\label{7}
\end{equation}%
includes Weiss' operators (\ref{1}) as a particular case for $d=1$ and $%
a_{1}=1$. The index $n+1$ in (\ref{7}) denotes the order of the differential
operator $L_{n+1}$.

If, for instance, for $d=1$ we put $x\equiv x$, $a(x)\equiv a_1(x_1)$, then $%
V=a_x+a\phi _{xx}/\phi _x$ and
\begin{equation*}
L_2=a^2\frac{d^2}{dx^2}+aa_x\frac{d}{dx}+\frac{1}{2}aV_x-\frac{1}{4}V^2\ .
\end{equation*}
Also, if we suggest the particular case
\begin{equation*}
a_i(\boldsymbol{x})=F(\boldsymbol{x})c_i \ ,
\end{equation*}
where $c_i \not= 0$ are constants, then defining
\begin{equation*}
\xi =\sum_{i=1}^{d}\frac{x_i}{c_i}\ ,
\end{equation*}
we get
\begin{equation*}
\frac{\partial }{\partial \xi }=\sum_{i=1}^{d}c_i\frac{\partial}{\partial x_i%
}
\end{equation*}
and $D=F(\boldsymbol{x})\partial /\partial \xi$, that allows to
present the operators in one-dimensional form, i.e. $D\phi =F\phi
_\xi$,
\begin{equation*}
V=F_{\xi} +F\frac{\phi _{\xi \xi}}{\phi _{\xi}} \ .
\end{equation*}
The 'degenerated' case $F=1$ corresponds to (\ref{1}).

Now we can formulate the following theorem.

\begin{thm}
A set of linearly-independent functions:
\begin{equation}
\{(D\phi (\boldsymbol{x})^{-n/2}\phi ^{k}(\boldsymbol{x})\ ,\
k=0,1,2,...,n\}\ ,  \label{8}
\end{equation}%
forms a null function
\begin{equation}
f_{n+1}(\boldsymbol{x})=[D\phi (\boldsymbol{x})]^{-n/2}\sum_{k=0}^{n}c_{k}%
\phi ^{k}(\boldsymbol{x})\ ,  \label{9}
\end{equation}%
where $\{c_{k}\}$ is a set of constants, such that $f_{n+1}(\boldsymbol{x})$
satisfies the equation:
\begin{equation}
L_{n+1}f_{n+1}(\boldsymbol{x})=0\ .  \label{10}
\end{equation}
\end{thm}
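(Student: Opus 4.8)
The plan is to exploit the fact that, although $D$ is assembled from several partial derivatives, equation~(\ref{5}) shows it to be a first-order \emph{derivation}: it obeys the Leibniz rule $D(AB)=(DA)B+A(DB)$ together with the power rule $D(A^{m})=mA^{m-1}DA$. Consequently every algebraic manipulation in the one-dimensional Weiss argument transfers verbatim once $d/dx$ is replaced by $D$. By linearity of $L_{n+1}$ and the form (\ref{9}) of $f_{n+1}$, it suffices to show that each basis function $\psi_{k}:=[D\phi]^{-n/2}\phi^{k}$, with $k=0,1,\dots,n$, is annihilated, i.e. $L_{n+1}\psi_{k}=0$.

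First I would record the key gauge identity. Writing $g:=D\phi$, the generalized pre-Schwarzian (\ref{6}) is the logarithmic $D$-derivative of $g$, namely $V=Dg/g$. Using the derivation property one checks that for any constant $c$ and any differentiable $u$,
\begin{equation*}
g^{-c}\,D\!\left(g^{c}u\right)=g^{-c}\!\left[c\,g^{c-1}(Dg)\,u+g^{c}\,Du\right]=\left(D+cV\right)u\ ,
\end{equation*}
so that each factor of (\ref{7}) is a conjugated copy of $D$:
\begin{equation*}
D+\left(j-\tfrac{n}{2}\right)V=g^{-(j-n/2)}\,D\,g^{\,(j-n/2)}\ .
\end{equation*}

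Next I would apply the factors to $\psi_{k}$ from right to left. The rightmost factor $(j=n)$ gives $g^{-n/2}D(g^{n/2}\psi_{k})=g^{-n/2}D(\phi^{k})=k\,g^{\,1-n/2}\phi^{k-1}$. In general, after $\ell$ factors the function has the shape $C_{\ell}\,g^{\,\ell-n/2}\phi^{\,k-\ell}$, and the inner exponent $g^{\,j-n/2}$ of the next factor (with $j=n-\ell$) exactly cancels the accumulated outer exponent $g^{\,\ell-n/2}$, leaving $D$ acting on the pure power $\phi^{\,k-\ell}$. An induction on $\ell$ then yields
\begin{equation*}
\left(\prod_{j=n-\ell+1}^{n}\left[D+\left(j-\tfrac{n}{2}\right)V\right]\right)\psi_{k}=\frac{k!}{(k-\ell)!}\,g^{\,\ell-n/2}\,\phi^{\,k-\ell}\ .
\end{equation*}

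Finally, the annihilation is immediate. Since $k\le n$, the step $\ell=k+1\le n+1$ forces $D$ to act on $\phi^{0}=1$, whence $D(1)=0$ and the result vanishes; every one of the remaining $n-k$ factors then acts on the zero function. Therefore $L_{n+1}\psi_{k}=0$ for each $k$, and by linearity $L_{n+1}f_{n+1}=\sum_{k=0}^{n}c_{k}L_{n+1}\psi_{k}=0$, which is (\ref{10}). The main obstacle is purely one of bookkeeping: justifying the conjugation identity for the composite operator $D$—which rests entirely on the power rule (\ref{5})—and keeping the exponents of $g$ aligned so that they telescope at each step. Once the gauge identity is in place, the mechanism of \cite{Weiss84} carries over with no genuinely new difficulty in $\mathbb{R}^{d}$.
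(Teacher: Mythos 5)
Your proposal is correct and follows essentially the same route as the paper's proof: both apply the brackets of (\ref{7}) to $(D\phi)^{-n/2}\phi^{k}$ from right to left and show inductively that after $\ell$ steps one obtains $\frac{k!}{(k-\ell)!}(D\phi)^{\ell-n/2}\phi^{k-\ell}$, so that a vanishing factor necessarily appears within the $n+1$ steps for every $k\le n$, after which linearity of $L_{n+1}$ finishes the argument. The only difference is packaging: where the paper verifies each bracket's action by expanding with the power rule (\ref{5}) and cancelling the $D^{2}\phi/D\phi$ terms by hand, you encode that same cancellation once and for all in the conjugation identity $D+cV=(D\phi)^{-c}D\,(D\phi)^{c}$, which also makes explicit the Leibniz property of $D$ that the paper uses only implicitly.
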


\begin{proof}
Let's check how the operator $L_{n+1}$ acts on $D\phi ^{-n/2}\phi
^k$. Its first (rightmost) bracket using (\ref{5}) and (\ref{6})
produces
$$
\left( D+\frac{n}{2}V\right) (D\phi )^{-n/2}\phi ^k= (D\phi
)^{-n/2}\phi ^k \left[
-\frac{n}{2}\frac{D^2\phi}{D\phi}+k\frac{D\phi}{\phi}+\frac{n}{2}V
\right] =k(D\phi )^{-n/2+1}\phi ^{k-1}\ .
$$
The next bracket produces
$$
\left( D+\left(n-1-\frac{n}{2}\right) V\right) k(D\phi )^{-n/2+1}\phi ^{k-1} =k(k-1)(D\phi )^{-n/2+2}\phi ^{k-2}\ ,
$$
and so on. Each operator bracket from (\ref{7}) increases the power of $D\phi$
and decreases the power of $\phi$ by one. After the application of all $n+1$ brackets we end up with
$$
L_{n+1}(D\phi )^{-n/2}\phi ^{k}=k(k-1)(k-2)\cdot ...\cdot (k-n)(D\phi )^{-n/2+n+1}\phi ^{k-n-1} \ .
$$
But $k=0,1,2,...,n$, and one of the factors $k$, $k-1$, ... , $k-n$ is zero, thus,
\begin{eqnarray}
\label{proof1}
L_{n+1}(D\phi )^{-n/2}\phi ^{k}=0 \ .
\end{eqnarray}
As each operator bracket $(D +mV)$ is a linear differential
operator, (\ref{proof1}) can be applied to each term of the linearly
independent set $\sum _k (D\phi )^{-n/2}\phi ^{k}$, that is
$$
L_{n+1}\sum _{k=0}^{n}(D\phi )^{-n/2}\phi ^{k}=0 \ .
$$
Then we end up with (\ref{10}).
\end{proof}

Thus, in our approach the function $\phi (\boldsymbol{x})$ plays the role of
producing function. Choosing first $\phi (\boldsymbol{x})$ and then using
its pre-Schwarzian (\ref{6}), we define the operator (\ref{7}) and
immediately get the solution (\ref{9}) following from Theorem 2.1.

Let's give an example of our method. We consider the following 2-dimensional
partial differential equation for $\psi (x,y)$:
\begin{equation}
\psi _{xx}+\psi _{yy}-2\psi _{xy}=0\ .  \label{e1}
\end{equation}%
Now let's choose for the simple producing function
\begin{equation}
\phi (x,y)=\frac{x}{y}  \label{e2}
\end{equation}%
the operator
\begin{equation}
D=\frac{\partial }{\partial x}-\frac{\partial }{\partial y}\ ,  \label{e3}
\end{equation}%
i.e. $a_{1}=1$, $a_{2}=-1$, $x_{1}=x$, and $x_{2}=y$. Then
\begin{equation*}
D\left( \frac{x}{y}\right) =\frac{x+y}{y^{2}}\ \ ;\ \ D^{2}\left( \frac{x}{y}%
\right) =\frac{2(x+y)}{y^{3}}
\end{equation*}%
and
\begin{equation*}
V=\frac{2}{y}\ .
\end{equation*}%
The generalized Weiss operator for (\ref{e3}) is
\begin{eqnarray}
L_{2} &=&\left( \frac{\partial }{\partial x}-\frac{\partial }{\partial y}-%
\frac{1}{y}\right) \left( \frac{\partial }{\partial x}-\frac{\partial }{%
\partial y}+\frac{1}{y}\right) =  \notag \\
&=&\frac{\partial ^{2}}{\partial x^{2}}-2\frac{\partial }{\partial x}\frac{%
\partial }{\partial y}+\frac{\partial ^{2}}{\partial y^{2}}\equiv D^{2}\
.
\label{e4}
\end{eqnarray}
Note it reproduces the differential operator in the left hand side
of (\ref{e1}).

The solution of (\ref{e1}) by Theorem 2.1 is given by
\begin{equation}
\psi (x,y)=\frac{c_{0}+c_{1}\phi }{(D\phi )^{1/2}}=\frac{c_{0}y+c_{1}x}{%
\sqrt{x+y}}\ .  \label{e5}
\end{equation}

\subsection{Linear and non-linear examples}

Whether or not the partial differential equation under consideration is
non-linear, the use of generalized Weiss operators can be seen to be an
effective method in the construction of some class of partial differential
equations. In order to make it clear, we provide the reader with one more
linear example and two non-linear examples of partial differential equations.

In general, the generalized Weiss operator%
\begin{equation*}
L_{2}=\left( D-\frac{V}{2}\right) \left( D+\frac{V}{2}\right)
\end{equation*}%
acting on a continously differentiable function $\psi (x,y)$ yields a
partial differential equation in the form%
\begin{equation}
D^{2}\psi (x,y)+Q(x,y)\psi (x,y)=0,  \label{sll}
\end{equation}%
where%
\begin{equation}
Q=\frac{1}{2}\left( DV-\frac{1}{2}V^{2}\right) =\frac{1}{2}\left[ D\left(
\frac{D^{2}\phi }{D\phi }\right) -\frac{1}{2}\left( \frac{D^{2}\phi }{D\phi }%
\right) ^{2}\right] .  \label{cq}
\end{equation}

In the following, as a linear example that corresponds to (\ref{sll}), we
consider the partial differential equation%
\begin{equation}
\psi _{xx}+x^{4}\psi _{yy}+2x^{2}\psi _{xy}+2x\psi _{y}+\frac{1-2x^{2}}{%
\left( 1+x^{2}\right) ^{2}}\psi =0.  \label{lpde}
\end{equation}%
Choosing for the producing function $\phi =x+y$ the operator%
\begin{equation*}
D=\frac{\partial }{\partial x}+x^{2}\frac{\partial }{\partial y},
\end{equation*}%
we get $D\phi =1+x^{2}$ and $D^{2}\phi =2x$. Then we obtain%
\begin{equation*}
V=\frac{2x}{1+x^{2}}
\end{equation*}%
which yields%
\begin{equation*}
Q\left( x,y\right) =\frac{1-2x^{2}}{\left( 1+x^{2}\right) ^{2}}.
\end{equation*}%
Thus the generalized Weiss operator%
\begin{equation*}
L_{2}=\left( \frac{\partial }{\partial x}+x^{2}\frac{\partial }{\partial y}%
\right) ^{2}+\frac{1-2x^{2}}{\left( 1+x^{2}\right) ^{2}}
\end{equation*}%
reproduces (\ref{lpde}) whose solution according to Theorem 2.1 is given by%
\begin{equation*}
\psi (x,y)=\frac{c_{0}+c_{1}\left( x+y\right) }{\sqrt{1+x^{2}}}.
\end{equation*}

Let us now discuss the following partial differential equation in the form
of (\ref{sll}) which, however, is non-linear:%
\begin{equation}
\psi \psi _{xx}+\psi \psi _{yy}-2\psi \psi _{xy}+\frac{1}{2}\psi _{x}^{2}+%
\frac{1}{2}\psi _{y}^{2}-\psi _{x}\psi _{y}=0.  \label{npde}
\end{equation}%
Next, we consider%
\begin{equation*}
D=-\psi \frac{\partial }{\partial x}+\psi \frac{\partial }{\partial y}
\end{equation*}
which leads to%
\begin{equation*}
D\psi =-\psi \psi _{x}+\psi \psi _{y}
\end{equation*}%
and%
\begin{equation*}
D^{2}\psi =\psi ^{2}\psi _{xx}-2\psi ^{2}\left( \psi _{xy}\right) +\psi
^{2}\psi _{yy}+\psi \psi _{x}^{2}+\psi \psi _{y}^{2}-2\psi \psi _{x}\psi _{y}
\end{equation*}%
for $\psi _{xy}=\psi _{yx}$. Thus (\ref{sll}) can be written as%
\begin{equation}
\psi ^{2}\psi _{xx}-2\psi ^{2}\psi _{xy}+\psi ^{2}\psi _{yy}+\psi \psi
_{x}^{2}+\psi \psi _{y}^{2}-2\psi \psi _{x}\psi _{y}+Q\psi =0.  \label{fde}
\end{equation}%
Here, $Q$ can be specified once we define the producing function $\phi (x,y)$%
. Next, we choose $\phi (x,y)=y-x$ to end up with $D\phi =2\psi $ and $%
D^{2}\phi =-2\psi \psi _{x}+2\psi \psi _{y}$ which together yield $V=\psi
_{y}-\psi _{x}$ for a non-trivial solution (i.e., $\psi \neq 0$). According
to (\ref{cq}), we obtain%
\begin{equation}
Q(x,y)=\frac{1}{2}\psi \psi _{xx}+\frac{1}{2}\psi \psi _{yy}-\psi \psi _{xy}-%
\frac{1}{4}\psi _{y}^{2}+\frac{1}{2}\psi _{x}\psi _{y}-\frac{1}{4}\psi
_{x}^{2}.  \label{qcof}
\end{equation}%
Using (\ref{qcof}) in (\ref{fde}), the non-linear partial differential
equation can be expressed as in (\ref{npde}). Then,%
\begin{equation*}
\psi (x,y)=\frac{\left[ c_{0}+c_{1}\left( y-x\right) \right] ^{2/3}}{2^{1/3}}
\end{equation*}%
can be identified to be a real solution of (\ref{npde}) in accordance with
Theorem 2.1.

Next, we present the second non-linear example for $n=2$ for which the
generalized Weiss operator can be written as%
\begin{equation*}
L_{3}=\prod_{j=0}^{2}\left[ D+\left( j-1\right) V\right] =\left( D-V\right)
D\left( D+V\right)
\end{equation*}%
or%
\begin{equation*}
L_{3}=D^{3}+4QD+2DQ
\end{equation*}%
where $Q$ is given by (\ref{cq}). The operator $L_{3}$ acting on a
continuously differentiable function $\psi (x,y,z)$ leads to a partial
differential equation in the form%
\begin{equation}
D^{3}\psi (x,y,z)+4Q(x,y,z)D\psi (x,y,z)+2\psi (x,y,z)DQ(x,y,z)=0.
\label{ipde}
\end{equation}%
Choosing, for instance,%
\begin{equation}
D=\frac{\partial }{\partial x}+\frac{\partial }{\partial y}+\psi \frac{%
\partial }{\partial z}  \label{op}
\end{equation}%
for a producing function $\phi (x,y,z)=x-y+z$, the partial differential
equation in (\ref{ipde}) becomes non-linear. One can see, for (\ref{op}),
that%
\begin{equation*}
D\psi =\psi _{x}+\psi _{y}+\psi \psi _{z}.
\end{equation*}%
The explicit form of (\ref{ipde}) can be obtained using%
\begin{equation*}
D\phi =\psi ,
\end{equation*}%
\begin{equation*}
D^{2}\phi =D\psi =\psi _{x}+\psi _{y}+\psi \psi _{z},
\end{equation*}%
and therefore%
\begin{equation*}
V=\frac{D^{2}\phi }{D\phi }=\frac{\psi _{x}+\psi _{y}+\psi \psi _{z}}{\psi }
\end{equation*}%
together with (\ref{cq}) in (\ref{ipde}). Following from Theorem 2.1,%
\begin{equation*}
\psi (x,y,z)=\pm \sqrt{c_{0}+c_{1}\left( x-y+z\right) +c_{2}\left(
x-y+z\right) ^{2}}
\end{equation*}%
is given as a solution of (\ref{ipde}).

\section{Discussion}

As we mentioned above, starting from the original work \cite{Weiss86}, the
operator family $L_{n+1}$ is usually used in the analysis of partial
differential equations of solitonic type \cite{Kud93}. Now we can extend the
class of those equations to the case of multi-dimensional spatial variables.
The appropriate choice of the coefficients $a_i$ can fit the necessary form
of operator.

\section{Acknowledgment}

The authors are pleased to express their gratitude for the valuable
suggestions of Dr. Plamen Djakov.

\end{document}